\numberwithin{equation}{section}
\newtheorem{theorem}{Theorem}[section]
\newtheorem{lemma}{Lemma}[section]
\newtheorem{example}{Example}[section]
\theoremstyle{remark}
\newtheorem{remark}{Remark}[section]
\title[Starlike log--harmonic functions]
 {Notes on the starlike log--harmonic mappings of order alpha}
\subjclass[2010]{30C35;30C45;35Q30}
\keywords{starlike log--harmonic mappings; log--harmonic mappings; subordination; Jacobian; real part.}
\begin{document}
\begin{abstract}
Let $h$ and $g$ be two analytic functions in the unit disc $\Delta$ that $g(0)=1$. Also let $\beta$ be a complex number with ${\rm Re}\{\beta\}>-1/2$. A function $f$ is said to be log--harmonic mapping if it has the following representation
\begin{equation*}
  f(z)=z |z|^{2\beta} h(z)\overline{g(z)}\quad (z\in \Delta).
\end{equation*}
A log--harmonic mapping $f$ is said to be starlike log--harmonic mapping of order $\alpha$, where $0\leq \alpha<1$, if
\begin{equation*}
  {\rm Re}\left\{\frac{zf_z -\overline{z}f_{\overline{z}}}{f}\right\}>\alpha\quad(z\in \Delta).
\end{equation*}
In this paper, by use of the subordination principle, we study some geometric properties of the starlike log--harmonic mappings of order $\alpha$. Also, we estimate the Jacobian of log--harmonic mappings.
\end{abstract}

\author[R. Kargar and H. Mahzoon]
       {R. Kargar and H. Mahzoon}
\address{Young Researchers and Elite Club,
Ardabil Branch, Islamic Azad University, Ardabil, Iran}
       \email {rkargar1983@gmail.com}
\address{Department of Mathematics, Firoozkouh
Branch, Islamic Azad University, Firoozkouh, Iran} \email {hesammahzoon1@gmail.com}
\maketitle
\section{Introduction}
Let $\mathcal{H}(\Delta)$ be the family of all analytic functions in the open unit disc $\Delta:=\{z : |z|<1\}$. Let $f$ and $g$ be two members of the class $\mathcal{H}(\Delta)$ which satisfy the normalized conditions $f(0)=0=f'(0)-1$ and $g(0)=0=g'(0)-1$. We say that a function $f$ is subordinate to $g$, written $f (z)\prec g(z)$ or $f\prec g$, if there exists an analytic function $\psi$, known as a Schwarz function, with $\psi(0)=0$ and $|\psi(z)|\leq|z|$ such that $f(z)=g(\psi(z))$ for all $z\in\Delta$. The set of all univalent (one--to--one) functions $f$ in $\Delta$ is denoted by $\mathcal{U}$.
Furthermore, if $g\in\mathcal{U}$ in $\Delta$, then we have the following equivalence relation
\begin{equation*}
    f(z)\prec g(z) \Leftrightarrow f(0)=g(0)\quad {\rm and}\quad f (\Delta)\subset g(\Delta).
\end{equation*}
 Also let $\mathcal{B}(\Delta)$ denote the set of all functions $w\in\mathcal{H}(\Delta)$ satisfying $|w(z)|<1$ in $\Delta$. A mapping $f$ is said to be log--harmonic in $\Delta$, if there exists an
analytic function $w\in\mathcal{B}(\Delta)$ such that $f$ is a solution of the nonlinear elliptic partial
differential equation
\begin{equation}\label{equ. log}
  \frac{\overline{f_{\overline{z}}}}{\overline{f}}=w(z)\frac{f_z}{f},
\end{equation}
where $w$ is the second complex dilatation of $f$ and $w\in \mathcal{B}(\Delta)$.
We note that if $f$ is a non--vanishing log--harmonic mapping, then
$f$ has the following form
\begin{equation*}
f(z)=h(z)\overline{g(z)},
\end{equation*}
 where both $h$ and $g$ are  non--vanishing analytic functions, and that if $f$ vanishes at $z=0$ but is not identically zero, then $f$ has the following form
\begin{equation}\label{repres}
  f(z)=z |z|^{2\beta} h(z)\overline{g(z)},
\end{equation}
where ${\rm Re}\{\beta\}>-1/2$, $h$ and $g$ are in $\mathcal{H}(\Delta)$, $h(0)\neq0$ and $g(0)=1$ (see \cite{Ab.Alhadi Trans.}). The class of functions of this
form has been studied extensively by many works, see for example \cite{ZAb. 1996}-- \cite{ali2016} and \cite{Mao}.

We continue the discussion with an example.
\begin{example}
  If $f(1)=1$ and ${\rm Re}\{\beta\}>-1/2$, then the function
  \begin{equation*}
  f_\beta(z)=z |z|^{2\beta}\quad(z\in\Delta),
  \end{equation*}
   is a solution of the equation \eqref{equ. log} in the complex plane $\mathbb{C}$ with $w\equiv \overline{\beta}/(1+\beta)$. It is a simple exercise that $f_\beta$ maps the unit disc $\Delta$ onto itself. The Figure \ref{Fig1} shows the image of the unit disc $\Delta$ under the function $f_\beta$ in two different cases.
\end{example}
\begin{figure}[!ht]
\centering
\subfigure[]{
\includegraphics[width=5cm]{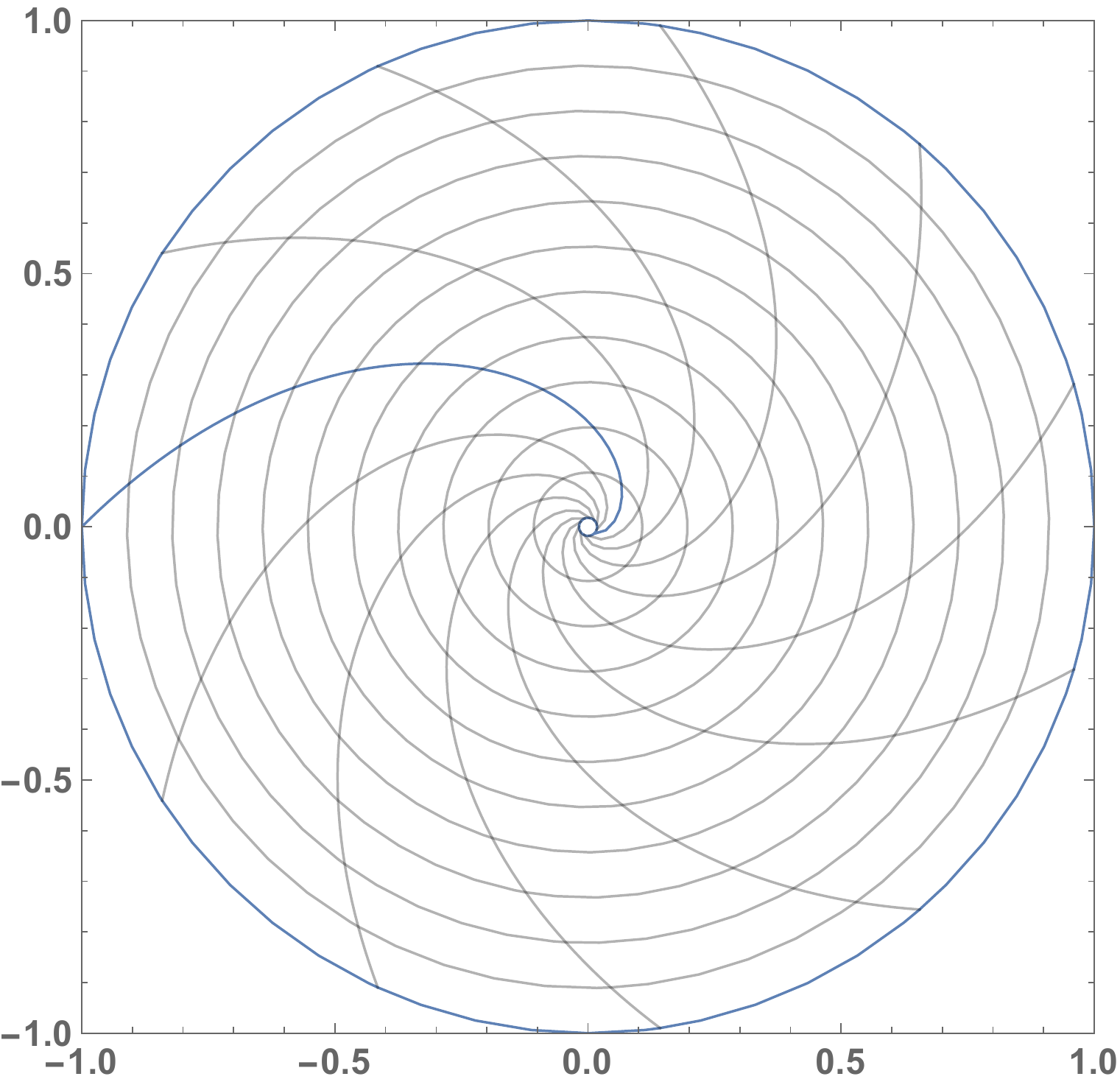}
	\label{fig:subfig1}
}
\hspace*{10mm}
\subfigure[]{
\includegraphics[width=5cm]{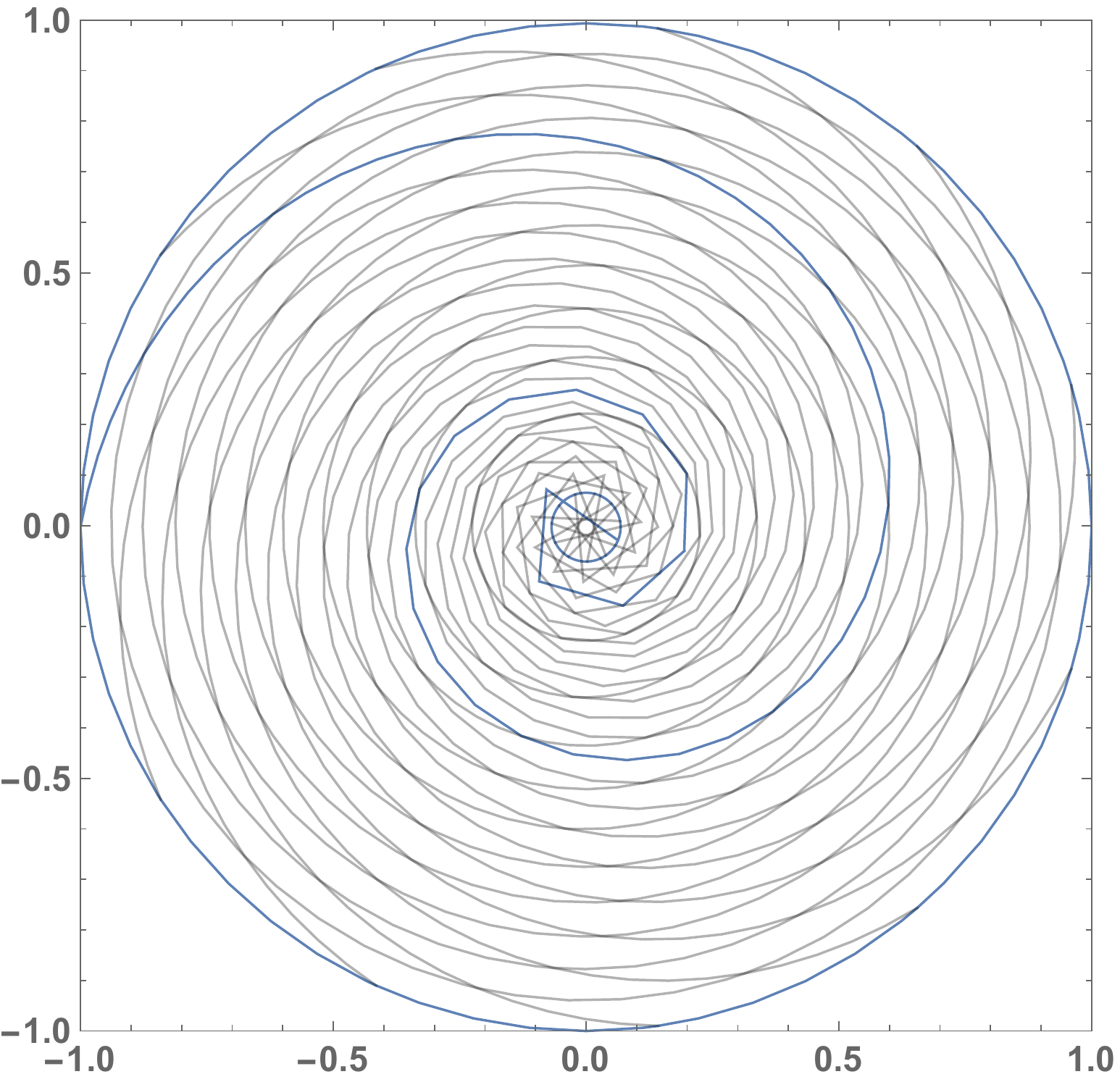}
	\label{fig:subfig2}
}

\caption[The boundary curve of $f_{i}(\Delta)$]
{\subref{fig:subfig1}: The boundary curve of $f_{i}(\Delta)$  ،
 \subref{fig:subfig2}: The boundary curve of $f_{-1/3+4i}(\Delta)$،
 }
\label{fig:subfig01}
\label{Fig1}
\end{figure}

We denote by $J_f(z)$ the Jacobian of log--harmonic mappings $f$ as follows
\begin{equation}\label{jaco}
  J_f(z)=|f_z|^2\left(1-|w(z)|^2\right).
\end{equation}
Since $w\in\mathcal{B}(\Delta)$, thus $J_f(z)>0$ and all non--constant log--harmonic mappings are sense--preserving and
open in the disc $\Delta$.

Let $f(z)=z |z|^{2\beta} h(z)\overline{g(z)}$, where $h(0)=g(0)=1$, be a log--harmonic mapping. Then we say that $f$ is a
{\it starlike log--harmonic ($\mathcal{SLH}$) mapping of order} $\alpha$, where $0\leq \alpha<1$, if
\begin{equation}\label{SLH}
  \frac{\partial}{\partial\theta}\arg f\left(re^{i\theta}\right)={\rm Re}\left\{\frac{zf_z -\overline{z}f_{\overline{z}}}{f}\right\}>\alpha\quad(z\in \Delta).
\end{equation}
We denote by $\mathcal{S^*_{LH}}(\alpha)$, the set of all starlike log--harmonic mappings
of order $\alpha$.

The following lemma will be useful.
\begin{lemma}\label{MIMO}
\cite[p. 35]{MM-book} Let $\Xi$ be a simply connected domain in the complex plane
$\mathbb{C}$, $\Xi\neq\mathbb{C}$ and let $b$ be a complex number with
${\rm Re}\{b\}>0$. Suppose that a function $
\psi:\mathbb{C}^2\times \Delta\rightarrow \mathbb{C}$ satisfies the
condition
\begin{equation*}
   \psi(i\rho,\sigma;z)\not\in
   \Xi
\end{equation*}
for all real $\rho,\sigma\leq-\mid
b-i\rho\mid^2/(2{\rm Re}\{b\})$ and all $z\in\Delta$. If the
function $p(z)$ defined by $p(z)=b+b_1z+b_2z^2+\cdots$ is analytic
in $\Delta$ and if
\begin{equation*}
   \psi(p(z),zp'(z);z)\in \Xi,
\end{equation*}
then ${\rm Re} \{p(z)\}>0$ in $\Delta$.
\end{lemma}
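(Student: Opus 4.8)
The plan is to argue by contradiction and reduce everything to a single ``first boundary point'' at which the admissibility hypothesis on $\psi$ can be invoked. Since $p(0)=b$ with ${\rm Re}\{b\}>0$, the real part of $p$ is positive near the origin, so if the conclusion failed there would have to be a point $z_0\in\Delta$ with ${\rm Re}\{p(z_0)\}=0$ while ${\rm Re}\{p(z)\}>0$ for every $|z|<|z_0|$. Writing $p(z_0)=i\rho$ with $\rho$ real, the entire argument then comes down to showing that the companion quantity $z_0p'(z_0)$ is a \emph{real} number $\sigma$ obeying $\sigma\le -|b-i\rho|^2/(2\,{\rm Re}\{b\})$. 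Once this is established, the hypothesis on $\psi$ forces $\psi(i\rho,\sigma;z_0)=\psi(p(z_0),z_0p'(z_0);z_0)\notin\Xi$, which directly contradicts the assumption that $\psi(p(z),zp'(z);z)\in\Xi$ for all $z\in\Delta$.

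To produce the required description of $z_0p'(z_0)$, the key step is to transfer the problem to the unit disc by the M\"obius map $T(w)=(w-b)/(w+\overline{b})$, which sends the right half--plane onto $\Delta$ and satisfies $T(b)=0$ because ${\rm Re}\{b\}>0$. Setting $\phi=T\circ p$ gives $\phi(0)=0$, and the defining property of $z_0$ yields $|\phi(z)|<1$ for $|z|<|z_0|$ together with $|\phi(z_0)|=1$; continuity along radii then gives $|\phi|\le1$ on the whole closed disc $|z|\le|z_0|$, so that $|\phi(z_0)|=\max_{|z|\le|z_0|}|\phi(z)|$. At this point I would invoke the Jack--Miller--Mocanu maximum lemma, which produces a real number $k\ge1$ with $z_0\phi'(z_0)=k\,\phi(z_0)$.

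It then remains to translate this identity back through $T$. Differentiating $\phi=(p-b)/(p+\overline{b})$ gives $\phi'=2\,{\rm Re}\{b\}\,p'/(p+\overline{b})^2$, so evaluating at $z_0$ with $p(z_0)=i\rho$ and using the factorization $(i\rho-b)(i\rho+\overline{b})=-|b-i\rho|^2$, a short computation collapses the relation $z_0\phi'(z_0)=k\,\phi(z_0)$ to
\[
z_0p'(z_0)=-\frac{k\,|b-i\rho|^2}{2\,{\rm Re}\{b\}}.
\]
This value is real and, since $k\ge1$, it is bounded above by $-|b-i\rho|^2/(2\,{\rm Re}\{b\})$, which is exactly the range over which $\psi$ was assumed to avoid $\Xi$, closing the contradiction.

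I expect the main obstacle to be the boundary step rather than the algebra. One must first be sure that a genuine point $z_0$ with $|z_0|<1$ and ${\rm Re}\{p(z_0)\}=0$ exists (rather than the real part merely approaching $0$ as $|z|\to1$), which uses continuity of $p$ on compact subdiscs together with the minimum principle for the harmonic function ${\rm Re}\{p\}$ to rule out a vanishing interior value without a genuine sign change; and one must then apply the Jack--Miller--Mocanu lemma cleanly through the conformal change of variable $T$. The passage from $z_0\phi'(z_0)$ to $z_0p'(z_0)$ is routine, but verifying the identity $(i\rho-b)(i\rho+\overline{b})=-|b-i\rho|^2$ exactly is what makes the specific bound $-|b-i\rho|^2/(2\,{\rm Re}\{b\})$ appear, so getting that piece right is essential.
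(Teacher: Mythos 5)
The paper does not prove this lemma at all: it is quoted verbatim from Miller and Mocanu's monograph (reference \cite{MM-book}, p.~35), so there is no in-paper argument to compare against. Your proof is correct and is essentially the standard one from that source: reduce to a first boundary point $z_0$ with ${\rm Re}\{p(z_0)\}=0$, transfer to the disc via $T(w)=(w-b)/(w+\overline{b})$, apply the Jack--Miller--Mocanu lemma to get $z_0\phi'(z_0)=k\phi(z_0)$ with $k\ge 1$, and use the identity $(i\rho-b)(i\rho+\overline{b})=-|b-i\rho|^2$ to land exactly on the admissibility bound. The only point worth tightening is the existence of $z_0$: it follows from pure compactness (if ${\rm Re}\{p\}>0$ on the closed disc $|z|\le r_0$ it stays positive on a slightly larger one, contradicting the definition of $r_0=\sup\{r:{\rm Re}\{p\}>0 \text{ on } |z|\le r\}$), so the appeal to the minimum principle is unnecessary, though harmless.
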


One of the goals of this paper is to investigate some geometric properties of the starlike log--harmonic mappings of order $\alpha$, another is to give an estimate for the Jacobian of log--harmonic mappings of the form \eqref{repres}.
\section{Main Results}

In the first result, by use of the subordination principle, we give a necessary and sufficient condition for functions of the form \eqref{repres} belonging to the class $\mathcal{S^*_{LH}}(\alpha)$.

\begin{theorem}\label{th. sub}
  Let $\alpha\in[0,1)$ and ${\rm Re}\{\beta\}>-1/2$. Then the function $f(z)=z |z|^{2\beta} h(z)\overline{g(z)}$ belongs to the class $\mathcal{S^*_{LH}}(\alpha)$ if, and only if,
  \begin{equation}\label{eq. sub th.}
    \left(z\frac{h'(z)}{h(z)}-z\frac{g'(z)}{g(z)}\right)\prec \frac{2(1-\alpha)z}{1-z}\quad(z\in \Delta).
  \end{equation}
\end{theorem}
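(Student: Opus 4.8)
The plan is to reduce the log--harmonic starlikeness condition to an ordinary analytic inequality by a direct Wirtinger--derivative computation, and then to recognize the right--hand side of \eqref{eq. sub th.} as a half--plane map so that the subordination collapses to that same inequality. Since $h$ and $g$ are non--vanishing with $h(0)=g(0)=1$, I would first write the logarithm of the representation \eqref{repres} in the separated form
\begin{equation*}
  \log f(z)=(1+\beta)\log z+\beta\log\overline{z}+\log h(z)+\overline{\log g(z)},
\end{equation*}
and then differentiate. Because $\partial_z\log\overline{z}=0$ and $\partial_z\,\overline{\log g(z)}=0$, while $\partial_{\overline z}\log z=0$ and $\partial_{\overline z}\log h(z)=0$, this yields
\begin{equation*}
  \frac{f_z}{f}=\frac{1+\beta}{z}+\frac{h'(z)}{h(z)},\qquad
  \frac{f_{\overline z}}{f}=\frac{\beta}{\overline z}+\overline{\frac{g'(z)}{g(z)}}.
\end{equation*}

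Substituting these into the starlikeness functional, the terms in $\beta$ cancel and I expect to obtain the clean identity
\begin{equation*}
  \frac{zf_z-\overline{z}f_{\overline z}}{f}=1+z\frac{h'(z)}{h(z)}-\overline{z\frac{g'(z)}{g(z)}}.
\end{equation*}
Writing $P(z):=z\,h'(z)/h(z)-z\,g'(z)/g(z)$, which is analytic in $\Delta$ with $P(0)=0$, and using $\operatorname{Re}\{\overline w\}=\operatorname{Re}\{w\}$, the real part of the above equals $1+\operatorname{Re}\{P(z)\}$. Hence the defining condition \eqref{SLH} is exactly equivalent to $\operatorname{Re}\{P(z)\}>-(1-\alpha)$ for all $z\in\Delta$.

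It remains to match this half--plane inequality to the subordination. I would show that the function $q(z)=2(1-\alpha)z/(1-z)$ is univalent on $\Delta$ with $q(0)=0$ and maps $\Delta$ onto the convex half--plane $\{w:\operatorname{Re}\{w\}>-(1-\alpha)\}$; this follows from the standard fact that $z/(1-z)$ carries $\Delta$ onto $\{\operatorname{Re}\{w\}>-1/2\}$. Since $q$ is univalent and $P(0)=q(0)=0$, the equivalence relation for subordination to a univalent function recorded in the Introduction gives
\begin{equation*}
  P(z)\prec q(z)\iff P(\Delta)\subset q(\Delta)\iff \operatorname{Re}\{P(z)\}>-(1-\alpha)\quad(z\in\Delta),
\end{equation*}
and combining this with the equivalence from the previous paragraph closes both directions at once. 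Note that Lemma \ref{MIMO} is not needed here; the whole argument rests on the univalent--subordination criterion already stated.

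The step I expect to require the most care is the derivative computation and the cancellation of the $\beta$--terms: one must keep the holomorphic and antiholomorphic pieces strictly separate when applying $\partial_z$ and $\partial_{\overline z}$, and then verify that conjugation does not affect the real part so that the genuinely non--analytic functional in \eqref{SLH} reduces to a condition on the analytic function $P$. Everything after that is the routine identification of the half--plane image and an appeal to the subordination equivalence.
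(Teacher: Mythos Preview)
Your proposal is correct and follows essentially the same route as the paper: compute $zf_z/f$ and $\overline{z}f_{\overline z}/f$ from the representation \eqref{repres}, observe the $\beta$--cancellation and that taking real parts removes the conjugate on the $g$--term, and then identify the resulting half--plane condition with subordination to $2(1-\alpha)z/(1-z)$. If anything, you are more explicit than the paper in justifying the final step, spelling out that $q(z)=2(1-\alpha)z/(1-z)$ is univalent with $q(\Delta)=\{\operatorname{Re} w>-(1-\alpha)\}$ so that the univalent--subordination equivalence from the Introduction applies.
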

\begin{proof}
  Let the function $f(z)=z |z|^{2\beta} h(z)\overline{g(z)}$ belongs to the class $\mathcal{S^*_{LH}}(\alpha)$ where $0\leq\alpha<1$. Then by a simple check we get
  \begin{equation}\label{zfz}
    \frac{zf_z}{f}=1+\beta+z\frac{h'(z)}{h(z)}\quad(z\in \Delta)
  \end{equation}
  and
  \begin{equation}\label{zbar fzbar}
     \frac{\overline{z}f_{\overline{z}}}{f}
     =\beta+\overline{z}\overline{\left(\frac{g'(z)}{g(z)}\right)}\quad(z\in \Delta).
  \end{equation}
  Thus
  \begin{equation}\label{zf pr. th.1}
    \frac{zf_z -\overline{z}f_{\overline{z}}}{f}
    =1+z\frac{h'(z)}{h(z)}-\overline{z}\overline{\left(\frac{g'(z)}{g(z)}\right)}
    \quad(z\in \Delta).
  \end{equation}
  Then $f\in \mathcal{S^*_{LH}}(\alpha)$ if, and only if,
  \begin{align*}
    \alpha< {\rm Re}\left\{\frac{zf_z -\overline{z}f_{\overline{z}}}{f}\right\}&={\rm Re}\left\{1+z\frac{h'(z)}{h(z)}
    -\overline{z}\overline{\left(\frac{g'(z)}{g(z)}\right)}\right\}\\ &={\rm Re}\left\{1+z\frac{h'(z)}{h(z)}
    -z\frac{g'(z)}{g(z)}\right\}.
    \end{align*}
    Therefore
\begin{equation*}
{\rm Re}\left\{z\frac{h'(z)}{h(z)}-z\frac{g'(z)}{g(z)}\right\}>\alpha-1\quad(z\in \Delta).
\end{equation*}
    Now by use of the subordination principle we get
\begin{equation*}
     {\rm Re}\left\{ z\frac{h'(z)}{h(z)}-z\frac{g'(z)}{g(z)}\right\}\prec
     \frac{2(1-\alpha)z}{1-z}\quad(z\in \Delta).
\end{equation*}
This ends the proof.
\end{proof}
We now have the following lemma directly.
\begin{lemma}\label{lemma re part}
  The function $f(z)=z |z|^{2\beta} h(z)\overline{g(z)}$ {\rm(}${\rm Re}\{\beta\}>-1/2${\rm)} belongs to the class $\mathcal{S^*_{LH}}(\alpha)$ if, and only if,
  \begin{equation}\label{Real part}
    1+{\rm Re}\left\{z\frac{h'(z)}{h(z)}-z\frac{g'(z)}{g(z)}\right\}>\alpha\quad(0\leq \alpha<1, z\in\Delta).
  \end{equation}
\end{lemma}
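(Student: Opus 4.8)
The plan is to read the claim off directly from the identity already established inside the proof of Theorem \ref{th. sub}, so almost no new work is needed. By the defining inequality \eqref{SLH}, the membership $f\in\mathcal{S^*_{LH}}(\alpha)$ is equivalent to
\[
{\rm Re}\left\{\frac{zf_z -\overline{z}f_{\overline{z}}}{f}\right\}>\alpha\quad(z\in\Delta),
\]
so the entire task reduces to rewriting the left-hand side in terms of the logarithmic derivatives $z h'(z)/h(z)$ and $z g'(z)/g(z)$ and then taking a real part.

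First I would recall the two elementary computations \eqref{zfz} and \eqref{zbar fzbar}, which combine into the expression \eqref{zf pr. th.1}, namely
\[
\frac{zf_z -\overline{z}f_{\overline{z}}}{f}=1+z\frac{h'(z)}{h(z)}-\overline{z}\,\overline{\left(\frac{g'(z)}{g(z)}\right)}.
\]
The only substantive step is to apply ${\rm Re}$ to both sides. The key observation is that the real part of a complex number is invariant under conjugation, i.e. ${\rm Re}\{\overline{w}\}={\rm Re}\{w\}$; applying this to $w=z\,g'(z)/g(z)$ gives
\[
{\rm Re}\left\{\overline{z}\,\overline{\left(\frac{g'(z)}{g(z)}\right)}\right\}
={\rm Re}\left\{\overline{z\dfrac{g'(z)}{g(z)}}\right\}
={\rm Re}\left\{z\frac{g'(z)}{g(z)}\right\}.
\]
Hence the conjugated term collapses and
\[
{\rm Re}\left\{\frac{zf_z -\overline{z}f_{\overline{z}}}{f}\right\}
=1+{\rm Re}\left\{z\frac{h'(z)}{h(z)}-z\frac{g'(z)}{g(z)}\right\}.
\]
Substituting this into the membership criterion above yields at once the stated equivalence \eqref{Real part}.

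There is really no hard part here, which is precisely why the lemma is said to follow \emph{directly} from Theorem \ref{th. sub}: once the conjugation-invariance of the real part is invoked, the equivalence is immediate and the subordination machinery of the theorem is not even needed. The single point deserving a moment's care is the bookkeeping that pairs the factor $\overline{z}$ with $\overline{(g'/g)}$ so that their product is exactly the conjugate of $z\,g'(z)/g(z)$; beyond verifying this, the argument is purely formal.
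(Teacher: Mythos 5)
Your proposal is correct and matches the paper exactly: the paper offers no separate proof, noting the lemma follows ``directly,'' because the identity \eqref{zf pr. th.1} and the observation ${\rm Re}\{\overline{w}\}={\rm Re}\{w\}$ applied to $w=z g'(z)/g(z)$ are already carried out inside the proof of Theorem \ref{th. sub}. Your write-up simply makes that implicit argument explicit, with the same decomposition via \eqref{zfz} and \eqref{zbar fzbar}.
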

Following, we give the representation theorem for the function $h$ of the mappings $f$ of the form \eqref{repres} in the set $\mathcal{S^*_{LH}}(\alpha)$.

\begin{theorem}
 A function $f(z)=z |z|^{2\beta} h(z)\overline{g(z)}$ belongs to the class $\mathcal{S^*_{LH}}(\alpha)$ if, and only if,
  \begin{equation}\label{repres.}
    h(z)=g(z)\exp\left(\int_{0}^{z}\frac{2(1-\alpha)\psi(t)}{t(1-\psi(t))}{\rm d}t\right)\quad(z\in\Delta),
  \end{equation}
  where $\psi$ is a Schwarz function, $0\leq \alpha<1$ and ${\rm Re}\{\beta\}>-1/2$.
\end{theorem}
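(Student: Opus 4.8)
The plan is to build directly on Theorem~\ref{th. sub}, which already characterizes membership in $\mathcal{S^*_{LH}}(\alpha)$ by the subordination \eqref{eq. sub th.}. The key observation is that the left-hand side of that subordination is precisely a logarithmic derivative: since $h'/h=(\log h)'$ and $g'/g=(\log g)'$, we have
\[
z\frac{h'(z)}{h(z)}-z\frac{g'(z)}{g(z)}=z\,\frac{d}{dz}\log\frac{h(z)}{g(z)}.
\]
This identity is what converts the subordination condition into the desired integral representation.

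First I would treat the forward direction. Assuming $f\in\mathcal{S^*_{LH}}(\alpha)$, Theorem~\ref{th. sub} gives the subordination \eqref{eq. sub th.}. By the definition of subordination there exists a Schwarz function $\psi$ (so $\psi(0)=0$ and $|\psi(z)|\le|z|$) with
\[
z\frac{h'(z)}{h(z)}-z\frac{g'(z)}{g(z)}=\frac{2(1-\alpha)\psi(z)}{1-\psi(z)}.
\]
Dividing by $z$ and using the logarithmic-derivative identity above yields
\[
\frac{d}{dz}\log\frac{h(z)}{g(z)}=\frac{2(1-\alpha)\psi(z)}{z(1-\psi(z))}.
\]
Integrating from $0$ to $z$ and invoking the normalization $h(0)=g(0)=1$, so that the logarithm vanishes at the origin, gives
\[
\log\frac{h(z)}{g(z)}=\int_{0}^{z}\frac{2(1-\alpha)\psi(t)}{t(1-\psi(t))}\,{\rm d}t,
\]
and exponentiating produces exactly \eqref{repres.}.

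For the converse I would reverse these steps: starting from the representation \eqref{repres.} with a given Schwarz function $\psi$, take logarithms and differentiate to recover the relation $zh'/h-zg'/g=2(1-\alpha)\psi(z)/(1-\psi(z))$, which is precisely the assertion that the subordination \eqref{eq. sub th.} holds. Theorem~\ref{th. sub} then returns $f\in\mathcal{S^*_{LH}}(\alpha)$, closing the equivalence.

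The only point requiring care---the main, if minor, obstacle---is the legitimacy of the integral: one must check that the integrand is analytic on $\Delta$ so that the path integral is well defined and admits an analytic antiderivative. This is settled by observing that $\psi(0)=0$ makes $\psi(t)/t$ analytic at the origin (a removable singularity), while $|\psi(z)|<1$ keeps $1-\psi(t)$ from vanishing; hence the integrand extends analytically across $t=0$ and the representation is genuine.
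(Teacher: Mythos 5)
Your proposal is correct and follows essentially the same route as the paper: invoke Theorem~\ref{th. sub}, write the subordination out with a Schwarz function, recognize the left side as $z(\log(h/g))'$, integrate using $h(0)=g(0)=1$, and reverse the steps for the converse. Your added remark on the analyticity of the integrand (removable singularity at $t=0$ since $\psi(0)=0$, and $1-\psi\neq 0$) is a small but welcome detail the paper leaves implicit.
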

\begin{proof}
  If $f(z)=z |z|^{2\beta} h(z)\overline{g(z)}$ belongs to the class $\mathcal{S^*_{LH}}(\alpha)$ where $0\leq \alpha<1$, then by Theorem \ref{th. sub} and by definition of subordination, there exists a Schwarz function $\psi$ such that
\begin{equation*}
    \left(z\frac{h'(z)}{h(z)}-z\frac{g'(z)}{g(z)}\right)= \frac{2(1-\alpha)\psi(z)}{1-\psi(z)}\quad(z\in \Delta),
\end{equation*}
  or equivalently
\begin{equation}\label{log hg}
  \left\{\log \frac{h(z)}{g(z)}\right\}'=\frac{2(1-\alpha)\psi(z)}{z(1-\psi(z))}\quad(z\in \Delta).
\end{equation}
  Integrating the last equality \eqref{log hg}, we get \eqref{repres.}. On the other hand, it is an easy calculation that a function having of the form \eqref{repres.} satisfies the condition \eqref{eq. sub th.}.
\end{proof}

Applying formula \eqref{repres.} for $\psi(z)=z$ gives the following.
\begin{example}
Let $g(z)$ be an analytic function with $g(0)=1$. Then the function
\begin{equation}\label{f(z)=}
  F_{\alpha,\beta}(z)=z|z|^{2\beta}\frac{|g(z)|^2}{(1-z)^{2(1-\alpha)}}\quad(z\in\Delta, 0\leq \alpha<1),
\end{equation}
is a starlike log--harmonic mapping of order $\alpha$.
\end{example}
\begin{remark}
  Since $g(0)=1$, thus if we consider the constant function $g(z)=1$ in \eqref{f(z)=} and let $\beta=0$, then we obtain
  \begin{equation*}
    f(z)=\frac{z}{(1-z)^{2(1-\alpha)}}\quad(0\leq \alpha<1).
  \end{equation*}
  Moreover, the Koebe function of order $\alpha$ (in particular, the Koebe function) belongs to the class $\mathcal{S^*_{LH}}(\alpha)$.
\end{remark}
\begin{example}
  Let $\psi(z)=z/(1+z)$ and $|z|<0.65$. Then it is easy to see that $\psi$ is Schwarz function. By putting $\psi(z)=z/(1+z)$ in the equation \eqref{repres.}, we get
  \begin{equation*}
  h(z)=g(z) e^{2(1-\alpha)z}\quad(|z|<0.65).
  \end{equation*}
  Thus the function
\begin{equation*}
  f_{\alpha,\beta}(z)=z|z|^{2\beta}e^{2(1-\alpha)z}|g(z)|^2\quad({\rm Re}\{\beta\}>-1/2, |z|<0.65),
\end{equation*}
  belongs to the class $ST_{LH}(\alpha)$, where $g$ is analytic with $g(0)=1$. In particular, the function
  \begin{equation*}
  \widehat{f}_{\alpha,\beta}(z)=z|z|^{2\beta}e^{2(1-\alpha)z}\quad(|z|<0.65),
  \end{equation*}
   is a starlike log--harmonic mapping of order $\alpha$ where ${\rm Re}\{\beta\}>-1/2$. Also, $\widehat{f}_{0,0}(z)=ze^{2z}$ $(|z|<0.65)$ is $\mathcal{SLH}$ mapping. It is a simple exercise that the radius of injectivity of the function $\widehat{f}_{0,0}$ is $1/2$. The Figures \ref{fig:subfig1} and \ref{fig:subfig2} show the image of the disc $|z|<0.65$ and $|z|<1/2$ under the function $\widehat{f}_{0,0}$, respectively. 
\end{example}
\begin{figure}[!ht]
\centering
\subfigure[]{
\includegraphics[width=5cm]{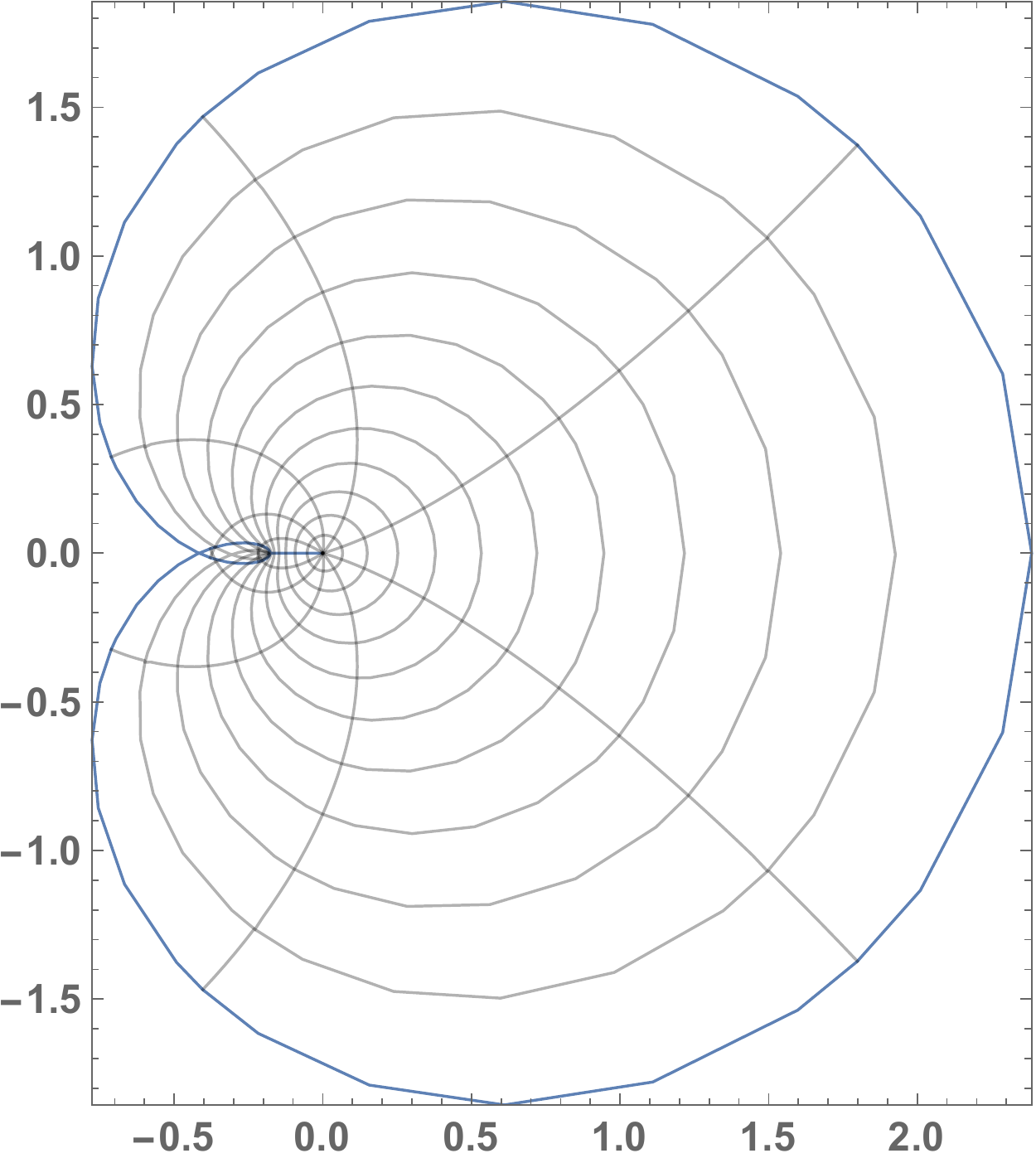}
	\label{fig:subfig1}
}
\hspace*{10mm}
\subfigure[]{
\includegraphics[width=5cm]{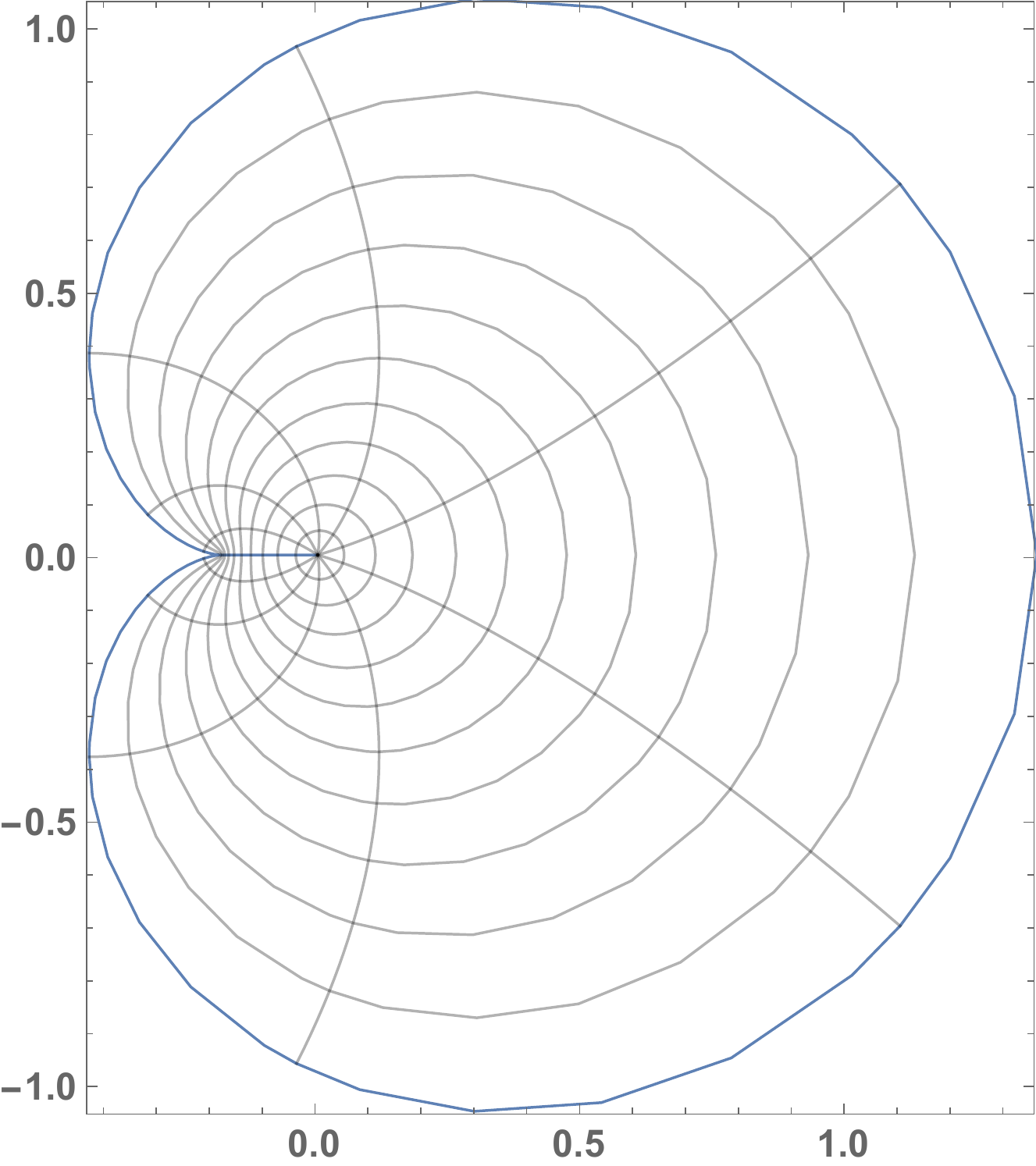}
	\label{fig:subfig2}
}
\caption[The boundary curve of $f(\Delta)$]
{\subref{fig:subfig1}: The boundary curve of $\widehat{f}_{0,0}(|z|<0.65)$  ،
 \subref{fig:subfig2}: The boundary curve of $\widehat{f}_{0,0}(|z|<1/2)$،
 }
\label{fig:subfig01}
\label{Fig2}
\end{figure}
\begin{theorem}
Let $\alpha\in[0,1)$ and ${\rm Re}\{\beta\}>-1/2$.
  Let a function $f(z)=z |z|^{2\beta} h(z)\overline{g(z)}$ belongs to the class $\mathcal{S^*_{LH}}(\alpha)$ where
  \begin{equation*}
  h(z)=1+\sum_{n=1}^{\infty}a_n z^n \ \ { and}\ \ g(z)=1+\sum_{n=1}^{\infty}b_n z^n,
  \end{equation*}
  be a log--harmonic mapping, and let
  \begin{equation*}
  \frac{zh'(z)}{h(z)}=\sum_{n=1}^{\infty}h_n z^n\quad {\rm and}\quad \frac{zg'(z)}{g(z)}=\sum_{n=1}^{\infty}g_n z^n.
\end{equation*}
If
  \begin{equation}\label{sum hn beta<1}
    \sum_{n=1}^{\infty}|h_n|<1-|\beta|
  \end{equation}
and
  \begin{equation}\label{sum hn+gn<}
    \sum_{n=1}^{\infty}\left(|h_n|+|g_n|\right)\leq 1-2|\beta|,
  \end{equation}
  then $f$ is sense preserving.
\end{theorem}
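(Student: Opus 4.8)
The plan is to reduce the sense-preserving property to the single inequality $|w(z)|<1$ for the second dilatation $w$ of $f$, and then to estimate $|w|$ directly from its power series. By the Jacobian formula \eqref{jaco} we have $J_f(z)=|f_z|^2(1-|w(z)|^2)$, so once I know that the denominator appearing in $w$ never vanishes (hence $f_z\neq0$ away from the origin), the condition $J_f>0$ is equivalent to $|w(z)|<1$ throughout $\Delta$. Thus the whole theorem rests on producing a good bound for $|w(z)|$.

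First I would compute $w$ explicitly. Starting from the defining equation \eqref{equ. log}, one has $w=(\overline{f_{\overline z}}/\overline f)(f/f_z)$, and using \eqref{zfz} together with the conjugate of \eqref{zbar fzbar} this becomes
\begin{equation*}
  w(z)=\frac{\overline\beta+z\,g'(z)/g(z)}{1+\beta+z\,h'(z)/h(z)}=\frac{\overline\beta+\sum_{n=1}^\infty g_n z^n}{1+\beta+\sum_{n=1}^\infty h_n z^n}\quad(z\in\Delta).
\end{equation*}
Here the numerator and denominator are exactly the two series introduced in the statement, shifted by the constants $\overline\beta$ and $1+\beta$.

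Next I would estimate the two parts for $|z|<1$. The triangle inequality gives $|\,\overline\beta+\sum g_n z^n\,|\le|\beta|+\sum|g_n|\,|z|^n$ for the numerator, while the reverse triangle inequality together with $|1+\beta|\ge 1-|\beta|$ gives $|\,1+\beta+\sum h_n z^n\,|\ge(1-|\beta|)-\sum|h_n|\,|z|^n$ for the denominator. Hypothesis \eqref{sum hn beta<1} forces $\sum|h_n|<1-|\beta|$, so the denominator is bounded below by a positive quantity for $|z|<1$; in particular $w$ is well defined (the denominator never vanishes) and $f_z\neq0$ for $z\neq0$. With these two bounds in hand, the target inequality $|w(z)|<1$ reduces to $2|\beta|+\sum_{n=1}^\infty(|h_n|+|g_n|)\,|z|^n<1$, and this is delivered by \eqref{sum hn+gn<}: since $|z|^n<1$ for $0<|z|<1$ we get $\sum(|h_n|+|g_n|)|z|^n<\sum(|h_n|+|g_n|)\le 1-2|\beta|$, so the left-hand side is strictly below $1$. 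Therefore $|w(z)|<1$ in $\Delta$ and $J_f>0$.

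The step I expect to be most delicate is not the estimate itself but pinning down the degenerate case in which all $h_n$ and $g_n$ vanish, so that the strictness coming from $|z|^n<1$ is lost. In that situation $h\equiv g\equiv1$ and $w$ collapses to the constant $\overline\beta/(1+\beta)$; its modulus is $<1$ precisely because $|1+\beta|^2-|\beta|^2=1+2\,{\rm Re}\{\beta\}>0$ under the standing assumption ${\rm Re}\{\beta\}>-1/2$. Handling this boundary case, and more generally keeping the lower bound for the denominator uniformly positive, is exactly where the two hypotheses \eqref{sum hn beta<1} and \eqref{sum hn+gn<} together with the condition on $\beta$ all get used.
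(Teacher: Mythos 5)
Your proposal is correct and follows essentially the same route as the paper: express $w(z)=\bigl(\overline{\beta}+\sum g_nz^n\bigr)/\bigl(1+\beta+\sum h_nz^n\bigr)$, bound the numerator by the triangle inequality and the denominator from below using \eqref{sum hn beta<1}, and then invoke \eqref{sum hn+gn<} to conclude $|w(z)|<1$. Your explicit treatment of the degenerate case $h_n=g_n=0$ for all $n$ (where the strictness from $|z|^n<1$ is lost and one must fall back on $|1+\beta|^2-|\beta|^2=1+2\,{\rm Re}\{\beta\}>0$) is a point the paper's chain of inequalities passes over silently, so it is a worthwhile addition rather than a deviation.
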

\begin{proof}
To show that $f$ is sense preserving, we need to prove
that $|w(z)|<1$ where $w$ denotes the dilatation of $f$. A simple calculation using \eqref{equ. log} and \eqref{repres}, gives us
  \begin{equation}\label{w(z)}
  w(z)=\frac{\overline{f_{\overline{z}}}}{\overline{f}}\frac{f}{f_z}
  =\frac{\overline{\beta}+z\frac{g'(z)}{g(z)}}
  {1+\beta+ z\frac{h'(z)}{h(z)}}\quad(z\in\Delta).
\end{equation}
Clearly $w(0)=\overline{\beta}/(1+\beta)=:\gamma$, where $\beta\in \mathbb{C}$ and ${\rm Re}\{\beta\}>-1/2$. We have
\begin{align*}
  |w(z)| &= \left|\frac{\overline{\beta}+z\frac{g'(z)}{g(z)}}
  {1+\beta+ z\frac{h'(z)}{h(z)}}\right| \\
  &=\left|\frac{\overline{\beta}+\sum_{n=1}^{\infty}g_n z^n}{1+\beta+\sum_{n=1}^{\infty}h_n z^n}\right|\\
  &\leq \frac{|\overline{\beta}|+\sum_{n=1}^{\infty}|g_n| |z|^n}{1-|\beta|-\sum_{n=1}^{\infty}|h_n| |z|^n}\\
  &< \frac{|\overline{\beta}|+\sum_{n=1}^{\infty}|g_n|}{1-|\beta|-\sum_{n=1}^{\infty}|h_n|}
  \leq1.
\end{align*}
This proves the theorem.
\end{proof}
To prove the following theorem, we will use the method of \cite[Theorem 2.1]{kessiberian}.
\begin{theorem}
  Let $\alpha\in(1/2,1)$. If the function $f(z)=z |z|^{2\beta} h(z)\overline{g(z)}$ belongs to the class $\mathcal{S^*_{LH}}(\alpha)$, then
  \begin{equation}\label{Re h/g}
    {\rm Re}\left\{\frac{h(z)}{g(z)}\right\}>\mu(\alpha):=\frac{1}{3-2\alpha}\quad(z\in \Delta).
  \end{equation}
\end{theorem}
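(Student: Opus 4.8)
The plan is to reduce the statement to a single application of Lemma~\ref{MIMO}, applied to the quotient $P(z):=h(z)/g(z)$. First I would record that, since $h(0)=g(0)=1$, we have $P(0)=1$ and
\[
z\frac{P'(z)}{P(z)}=z\frac{h'(z)}{h(z)}-z\frac{g'(z)}{g(z)}.
\]
By Lemma~\ref{lemma re part} (equivalently, by the subordination in Theorem~\ref{th. sub}), the hypothesis $f\in\mathcal{S^*_{LH}}(\alpha)$ is exactly the statement that $zP'/P$ takes all its values in the half--plane $\Xi:=\{w:{\rm Re}\{w\}>\alpha-1\}$, which is a simply connected proper subdomain of $\mathbb{C}$.

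Next I would normalize so that the lemma's constant $b$ has positive real part. Setting $\mu:=\mu(\alpha)=1/(3-2\alpha)$ and
\[
p(z):=\frac{P(z)-\mu}{1-\mu},
\]
we get $p(0)=1$ (so $b=1$ and ${\rm Re}\{b\}=1>0$), and the conclusion ${\rm Re}\{P(z)\}>\mu$ is equivalent to ${\rm Re}\{p(z)\}>0$. Writing $P=\mu+(1-\mu)p$, I would introduce the admissibility function
\[
\psi(r,s;z):=\frac{(1-\mu)s}{\mu+(1-\mu)r},
\]
so that $\psi\bigl(p(z),zp'(z);z\bigr)=zP'(z)/P(z)\in\Xi$. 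It then remains only to check the admissibility condition of Lemma~\ref{MIMO}.

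The decisive step is to verify that $\psi(i\rho,\sigma;z)\notin\Xi$ for all real $\rho$ and all $\sigma\le-(1+\rho^2)/2$, the latter being $-|b-i\rho|^2/(2{\rm Re}\{b\})$ for $b=1$. A short computation gives
\[
{\rm Re}\{\psi(i\rho,\sigma;z)\}=\frac{\mu(1-\mu)\,\sigma}{\mu^2+(1-\mu)^2\rho^2},
\]
which, since $\mu(1-\mu)>0$, is increasing in $\sigma$ and hence maximal at $\sigma=-(1+\rho^2)/2$. Substituting $\mu=1/(3-2\alpha)$ (so that $1-\mu=2(1-\alpha)/(3-2\alpha)$), the required inequality ${\rm Re}\{\psi(i\rho,\sigma;z)\}\le\alpha-1$ reduces, after clearing the common positive factor, to
\[
1+\rho^2\ge 1+4(1-\alpha)^2\rho^2,
\]
i.e. to $4(1-\alpha)^2\le 1$. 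This is precisely the constraint $\alpha\ge 1/2$, which is exactly why the hypothesis $\alpha\in(1/2,1)$ appears; the value $\mu(\alpha)=1/(3-2\alpha)$ is the largest one for which both the constant term and the coefficient of $\rho^2$ in this quadratic remain nonnegative. Once admissibility is confirmed, Lemma~\ref{MIMO} gives ${\rm Re}\{p(z)\}>0$, and therefore ${\rm Re}\{h(z)/g(z)\}>\mu(\alpha)$ on $\Delta$. I expect the main obstacle to be this admissibility computation, both in pinning down the sharp constant $\mu(\alpha)$ and in seeing that the residual inequality is governed entirely by the sign of $1-4(1-\alpha)^2$.
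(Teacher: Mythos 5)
Your proposal is correct and follows essentially the same route as the paper: the same normalization $p=(h/g-\mu)/(1-\mu)$, the same appeal to Lemma~\ref{lemma re part} and Lemma~\ref{MIMO}, and the same admissibility computation (the paper phrases the target set as $\{{\rm Re}\,w>\alpha\}$ with the extra constant $1$ and minimizes the auxiliary function $Q(\rho)$, whereas you drop the $1$ and substitute $\mu=1/(3-2\alpha)$ directly, but the inequality being verified is identical). Your reduction to $4(1-\alpha)^2\le 1$ is a clean way of exposing why the hypothesis $\alpha>1/2$ is needed.
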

\begin{proof}
  For convenience, we put $\mu(\alpha):=\mu$ and note that $\mu\in(1/2,1)$ when $ \alpha\in(1/2,1)$. Define
  \begin{equation}\label{p(z)}
    p(z)=\frac{1}{1-\mu}\left(\frac{h(z)}{g(z)}-\mu\right)\quad(z\in\Delta).
  \end{equation}
  Then $p(z)$ is analytic function in $\Delta$ and $p(0)=1$. A simple check gives
  \begin{equation*}
    1+z\frac{h'(z)}{h(z)}-z\frac{g'(z)}{g(z)}=1+\frac{(1-\mu)zp'(z)}{\mu+(1-\mu)p(z)}
    =:\phi(p(z),zp'(z)),
  \end{equation*}
  where
  \begin{equation}\label{phi}
    \phi(x,y)=1+\frac{(1-\mu)y}{\mu+(1-\mu)x}.
  \end{equation}
  By Lemma \ref{lemma re part}, since $f(z)=z |z|^{2\beta} h(z)\overline{g(z)}\in \mathcal{S^*_{LH}}(\alpha)$, we can consider
  \begin{equation*}
    \{\phi(p(z),zp'(z)):z\in \Delta\}\subset \{w\in \mathbb{C}:{\rm Re}\{w\}>\alpha\}=:\Omega_\alpha.
  \end{equation*}
  Now for all real $\rho$ and $\sigma\leq -\frac{1}{2}(1+\rho^2)$, we get
  \begin{align*}
    {\rm Re}\{\phi(i\rho,\sigma)\} &={\rm Re} \left\{1+\frac{(1-\mu)\sigma}{\mu+(1-\mu)i\rho}\right\}\\
    &=1+\frac{\mu(1-\mu)\sigma}{\mu^2+(1-\mu)^2\rho^2}\\
    &\leq 1-\frac{1}{2}\mu(1-\mu)Q(\rho),
  \end{align*}
  where
  \begin{equation}\label{Q(rho)}
    Q(\rho):=\frac{1+\rho^2}{\mu^2+(1-\mu)^2\rho^2}.
  \end{equation}
  It is easy to see that
  \begin{equation*}
    Q'(\rho)=\frac{2(2\mu-1)\rho}{\left(\mu^2+(1-\mu)^2\rho^2\right)^2}
  \end{equation*}
  and $Q'(0)=0$ occurs at only $\rho=0$ and satisfies $Q(0)=1/\mu^2$. Also
  \begin{equation*}
    \lim_{\rho\rightarrow\pm\infty}Q(\rho)=\frac{1}{(1-\mu)^2}.
  \end{equation*}
  Thus we have
  \begin{equation*}
    \frac{1}{\mu^2}\leq Q(\rho)<\frac{1}{(1-\mu)^2}\quad(1/2< \mu<1).
  \end{equation*}
  Hence
  \begin{equation*}
    {\rm Re}\{\phi(i\rho,\sigma)\}\leq 1-\frac{1}{2}\mu(1-\mu)\frac{1}{\mu^2}=\frac{3\mu-1}{2\mu}=\alpha
  \end{equation*}
  and this shows that $ {\rm Re}\{\phi(i\rho,\sigma)\}\not\in \Omega_\alpha$. Moreover, by Lemma \ref{MIMO} we get ${\rm Re}\{p(z)\}>0$ in $\Delta$, and concluding the proof.
\end{proof}
Finally, we give an estimate for the Jacobian of log--harmonic mappings of the form \eqref{repres}.
\begin{theorem}
  If $f(z)=z |z|^{2\beta} h(z)\overline{g(z)}$ is log--harmonic mapping, then
  \begin{equation*}
    \frac{(1-|\gamma|^2)(1-|z|^2)}{(1+|\gamma||z|)^2}|f_z|^2\leq J_f(z)\leq
  \left\{
  \begin{array}{ll}
    \frac{(1-|\gamma|^2)(1-|z|^2)}{(1-|\gamma||z|)^2}|f_z|^2 & \quad\hbox{$|z|<|\gamma|$,} \\ \\
     |f_z|^2 & \quad\hbox{$|z|\geq|\gamma|$,}
\end{array}%
\right.
  \end{equation*}
  where $\gamma=\overline{\beta}/(1+\beta)$, $\beta\in \mathbb{C}$, ${\rm Re}\{\beta\}>-1/2$ and $z\in \Delta$.
\end{theorem}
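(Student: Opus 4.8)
The plan is to reduce everything to a pointwise estimate on the modulus of the dilatation. By the definition \eqref{jaco} we have $J_f(z)=|f_z|^2\bigl(1-|w(z)|^2\bigr)$, so the two--sided bound on $J_f(z)$ is equivalent to a two--sided bound on $1-|w(z)|^2$, i.e.\ to controlling $|w(z)|$ from above and below. The only structural fact I will use about $w$ is that it is an analytic self--map of $\Delta$ (since $w\in\mathcal{B}(\Delta)$) with the prescribed initial value $w(0)=\overline{\beta}/(1+\beta)=\gamma$, which was already recorded in \eqref{w(z)}; note also that ${\rm Re}\{\beta\}>-1/2$ forces $|\gamma|<1$, so all the denominators below are positive.

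First I would invoke the Schwarz--Pick lemma for the self--map $w$ of $\Delta$: for every $z\in\Delta$, writing $r=|z|$,
\begin{equation*}
  \left|\frac{w(z)-\gamma}{1-\overline{\gamma}\,w(z)}\right|\le r.
\end{equation*}
Geometrically this confines the value $w(z)$ to the closed disc that is the image of $\{|\zeta|\le r\}$ under the inverse of the automorphism $\zeta\mapsto(\zeta-\gamma)/(1-\overline{\gamma}\zeta)$. Computing the centre and radius of that image disc, and then its nearest and farthest points from the origin, yields the sharp modulus bounds
\begin{equation*}
  \frac{|\gamma|-r}{1-|\gamma|r}\le |w(z)|\le \frac{|\gamma|+r}{1+|\gamma|r}\qquad(r\le|\gamma|),
\end{equation*}
while for $r\ge|\gamma|$ the image disc contains the origin, so the lower bound becomes simply $0$ and the upper bound $\frac{|\gamma|+r}{1+|\gamma|r}$ persists.

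With these in hand the rest is algebra. The upper bound on $|w(z)|$ produces the lower bound on $J_f$: factoring the difference of two squares gives
\begin{equation*}
  1-\left(\frac{|\gamma|+r}{1+|\gamma|r}\right)^2
  =\frac{(1+|\gamma|r)^2-(|\gamma|+r)^2}{(1+|\gamma|r)^2}
  =\frac{(1-|\gamma|^2)(1-r^2)}{(1+|\gamma|r)^2},
\end{equation*}
and multiplying by $|f_z|^2$ delivers the claimed lower estimate. For the upper estimate I split on $r$. When $r<|\gamma|$ I use the lower bound $|w(z)|\ge(|\gamma|-r)/(1-|\gamma|r)$; the mirror--image difference--of--squares manipulation gives $1-|w(z)|^2\le(1-|\gamma|^2)(1-r^2)/(1-|\gamma|r)^2$. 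When $r\ge|\gamma|$ the lower bound on $|w(z)|$ degenerates to $0$, so trivially $1-|w(z)|^2\le1$ and hence $J_f(z)\le|f_z|^2$. Multiplying through by $|f_z|^2$ in each case reproduces exactly the branched upper bound in the statement.

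The only genuinely delicate point is the middle step: converting the hyperbolic--disc constraint furnished by Schwarz--Pick into explicit extremal values of $|w(z)|$, and in particular pinning down the transition at $r=|\gamma|$, where the image disc begins to contain the origin and the lower modulus bound drops to $0$. This transition is precisely the case split $|z|<|\gamma|$ versus $|z|\ge|\gamma|$ appearing in the theorem. Once the bounds $\frac{|\gamma|-r}{1-|\gamma|r}\le|w(z)|\le\frac{|\gamma|+r}{1+|\gamma|r}$ are secured, the two closing factorizations are routine and symmetric to one another.
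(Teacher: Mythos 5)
Your proposal is correct and follows essentially the same route as the paper: apply the Schwarz lemma to the Möbius-shifted dilatation $(w(z)-\gamma)/(1-\overline{\gamma}w(z))$, deduce that $w(z)$ lies in the image disc with centre $\gamma(1-|z|^2)/(1-|\gamma|^2|z|^2)$ and radius $(1-|\gamma|^2)|z|/(1-|\gamma|^2|z|^2)$, and convert the resulting modulus bounds on $w$ into bounds on $J_f=|f_z|^2(1-|w|^2)$. You in fact carry out explicitly the final extremal-modulus and difference-of-squares computations (including the case split at $|z|=|\gamma|$) that the paper leaves implicit after "concluding the proof."
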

\begin{proof}
Let $\gamma=\overline{\beta}/(1+\beta)$, $\beta\in \mathbb{C}$ and ${\rm Re}\{\beta\}>-1/2$.
Consider the function
\begin{equation}\label{varphi}
  \varphi(z):=\frac{w(z)-\gamma}{1-\overline{\gamma}w(z)}\quad(z\in \Delta),
\end{equation}
where $w$ is the dilatation of $f$ of the form \eqref{equ. log}.
We have that $\varphi$ is analytic (because $|1+\overline{\beta}|>|\beta|$), $\varphi(0)=0$ and $|\varphi(z)|\leq 1$. Thus the function $\varphi$ satisfies the assumptions of Schwarz lemma which gives $|\varphi(z)|\leq |z|$ or
\begin{equation}\label{|w(z)-gamma|}
  |w(z)-\gamma|\leq |z||1-\overline{\gamma}w(z)|\quad(z\in \Delta).
\end{equation}
From \eqref{varphi}, we get
\begin{equation*}
w(z)=\frac{\varphi(z)+\gamma}{1+\overline{\gamma}\varphi(z)}.
\end{equation*}
This shows that the dilatation $w(z)$ is subordinate to
\begin{equation*}
\phi(z)=\frac{z(z+\gamma)}{1+\overline{\gamma}z}.
\end{equation*}
Since the linear transformation $\phi(z)$ maps $|z|=r$ onto the disc with the center
\begin{equation*}
  \left(\frac{x(1-|z|^2)}{1-|\gamma|^2 |z|^2},\frac{y(1-|z|^2)}{1-|\gamma|^2 |z|^2}\right)\quad(x={\rm Re}\{\gamma\}, y={\rm Im}\{\gamma\})
\end{equation*}
and the radius
\begin{equation*}
  \frac{(1-|\gamma|^2)|z|}{1-|\gamma|^2 |z|^2},
\end{equation*}
therefore, by use of the above and by the subordination principle, the inequality \eqref{|w(z)-gamma|} gives that
\begin{equation*}
  \left|w(z)-\frac{\gamma(1-|z|^2)}{1-|\gamma|^2|z|^2}\right|
  \leq\frac{(1-|\gamma|^2)|z|}{1-|\gamma|^2|z|^2}
\end{equation*}
and concluding the proof.
\end{proof}
\noindent
{\bf Acknowledgements.} The authors would like to thank the anonymous referee(s) for carefully reading of the manuscript and for the useful observations.


\begin{thebibliography}{9}
\bibitem{ZAb. 1996} Z. Abdulhadi, \textit{Close--to--starlike log--harmonic mappings}, Internat. J. Math. Math. Sci. {\bf 19}(3), 563--574 (1996)

\bibitem{ZAb. 2002} Z. Abdulhadi, \textit{Typically real log--harmonic mappings}, Internat. J. Math. Math. Sci. {\bf 31}(1), 1--9 (2002)

\bibitem{Ab.Alhadi Trans.} Z. Abdulhadi and D. Bshouty, \textit{Univalent functions in $H\overline{H}$}, Tran. Amer. Math. Soc. {\bf 305}(2), 841--849 (1988)

\bibitem{ZAb. 1987} Z. Abdulhadi and W. Hergartner, \textit{Spirallike logharmonic mappings}, Complex Variables Theory Appl. {\bf9}(2--3), 121--130 (1987)

\bibitem{ZAb. 1996 JMAA} Z. Abdulhadi and W. Hergartner, \textit{One pointed univalent logharmonic mappings}, J. Math. Anal. Appl. {\bf203}(2), 333--351 (1996)

\bibitem{ali2016} R. M. Ali, Z. Abdulhadi and Ch. Zhen, \textit{The Bohr radius for starlike logharmonic mappings}, Complex Var. Elliptic Equ. {\bf61}(1),  1--14 (2016)

\bibitem{kessiberian} R. Kargar, A. Ebadian and J. Sok\'{o}{\l}, \textit{On subordination of some analytic functions}, Sib. Math. J. {\bf 57}(4), 599--604 (2016)

\bibitem{MM-book}S.S. Miller and P.T. Mocanu, \textit{ Differential
Subordinations, Theory and Applications}, Series of Monographs and
Textbooks in Pure and Applied Mathematics, Vol. 225, Marcel Dekker
Inc., New York / Basel 2000.

\bibitem{Mao} Zh. Mao, S. Ponnusamy, and X. Wang, \textit{Schwarzian derivative and Landau's theorem for logharmonic mappings}, Complex Var. Elliptic Equ. {\bf58}(8), 1093--1107 (2013)

\end{thebibliography}
\end{document}